\newtheorem{theorem}{Theorem}[section]
\newtheorem{corollary}[theorem]{Corollary}
\newtheorem{lemma}[theorem]{Lemma}
\theoremstyle{definition}
\theoremstyle{remark}
\numberwithin{equation}{section}
\newcommand{\set}[1]{\left\{#1\right\}}
\newcommand{\br}[1]{\left(#1\right)}
\newcommand{\mat}[1]{\left[#1\right]}
\begin{document}

\title[On the Integral Representations of the $k$-Pell and  $k$-Pell-Lucas Numbers]{On the Integral Representations of the $k$-Pell and  $k$-Pell-Lucas Numbers}%
\author{ACHARIYA NILSRAKOO}%
\address{Department of Mathematics, Faculty of Science,\\
Ubon Ratchathani Rajabhat University,
Ubon Ratchathani, 34000}%
 \email{achariya.n@ubru.ac.th}
\author{Weerayuth Nilsrakoo}%
\address{Department of Mathematics, Faculty of Science,\\
 Ubon Ratchathani University,
 Ubon Ratchathani, 34190\\
THAILAND}
 \email{weerayuth.ni@ubu.ac.th}
 \date{}
\thanks{Mathematics Subject Classification (2020): 11B39, 11B37.}
\thanks{Key words and phrases: $k$-Pell number; $k$-Pell-Lucas number; integral representation}
\thanks{The corresponding author is Weerayuth Nilsrakoo}%
%
\begin{abstract}In this paper, the integral representations of the $k$-Pell and $k$-Pell-Lucas numbers are presented. By employing Binet’s formulas for these numbers, we derive several identities and verify their integral representations using basic integral calculus. Our results are also deduced and related to the Fibonacci, Lucas, Pell, and  Pell-Lucas numbers.
\end{abstract}
\maketitle

\section{Introduction}\label{Introduction}

Fibonacci, Lucas, Pell, and Pell–Lucas sequences have been discussed in many articles and books (see \cite{AndBag2020,Bic1915,Dil2000,ErdKes2022,GlaZho2015,Hor1971,HorMah1985,Kos2018,ANil2024,
Ste2022,Ste2023,Vaj2008}). Recall that the Fibonacci numbers $F_n$ are defined via the recurrence relation
\begin{equation*}
F_n=F_{n-1}+F_{n-2}
\end{equation*}
for $n\geq 2$ with $F_0=0$ and $F_1=1$. The Lucas numbers $L_n$ are defined via the recurrence relation
\begin{equation*}
L_n=L_{n-1}+L_{n-2}
\end{equation*}
for $n\geq 2$ with $L_0=2$ and $L_1=1$.

There are several ways to represent these numbers, one of which is the integral representation. The first example of integral representations for Fibonacci numbers of even orders, using an approach based on the (Gaussian) hypergeometric function, was presented in the paper by Dilcher \cite{Dil2000}  from 2000, where he showed that
\begin{equation*}
F_{2n}=\frac{n}{2}\br{\frac{3}{2}}^{n-1}\int_{0}^{\pi}\br{1+\frac{\sqrt{5}}{3}\cos x}^{n-1}\sin xdx.
\end{equation*}

In 2015, Glasser and Zhou \cite{GlaZho2015} worked out an explicit integral representation for $F_n$ involving trigonometric functions. Indeed, the main result of their paper is a representation of the form
\begin{equation*}
F_{n} = \frac{\alpha^n}{\sqrt{5}}-\dfrac{2}{\pi} \int_{0}^{\infty}\br{\dfrac{\sin(x/2)}{x}}\br{\dfrac{\cos(2nx)-2\sin(nx)\sin x}{5\sin^2x+\cos^2 x}}dx,
\end{equation*}
where $\alpha=\frac{1+\sqrt{5}}{2}$ is the golden ratio. Another representation is given
by Andrica and Bagdasar in \cite{AndBag2020}. In a recent year, Stewart \cite{Ste2022} derived some appealing integral representations for Fibonacci and Lucas numbers. For instance, he proved the representations
\begin{equation*}F_{\ell n}=\dfrac{nF_\ell}{2^n}\int_{-1}^{1} (L_\ell+\sqrt{5}F_\ell x)^{n-1} dx
\end{equation*}
and
\begin{equation*}L_{\ell n}=\frac{1}{2^n}\int_{-1}^{1} (L_\ell+\sqrt{5}(n+1)F_\ell x)(L_\ell+\sqrt{5}F_\ell x)^{n-1} dx,
\end{equation*}
where $\ell$ and $n$ are  non-negative integers. The special case of this identity for $\ell=1$ is also discussed in Stewart’s paper \cite{Ste2023}  from 2023.

Like Fibonacci and Lucas numbers, the Pell family is widely used. Pell and Pell–Lucas numbers also provide boundless opportunities to experiment, explore, and conjecture.
Recall that Pell number $P_n$ is defined by the recurrence relation
\begin{equation*}
P_n=2P_{n-1}+P_{n-2}
\end{equation*}
for $n\geq 2$ with $P_0=0$ and $P_1=1$.  The Pell-Lucas number $Q_n$ is defined by the recurrence relation
   \begin{equation*}
Q_n=2Q_{n-1}+Q_{n-2}
\end{equation*}
for $n\geq 2$ with $Q_0=2$ and $Q_1=2$.  The Binet’s formulas for
the Pell and  Pell-Lucas numbers are
\begin{equation*}P_n=\frac{1}{2\sqrt{2}}\br{\varphi^n-\frac{(-1)^n}{\varphi^n}}
\,\,\text{and}\,\,Q_n=\varphi^n+\frac{(-1)^n}{\varphi^n},
\end{equation*}
where $\varphi=1+\sqrt{2}$. The integral representations for Pell and Pell-Lucas numbers are studied by the first author in \cite{ANil2024} as follows:
\begin{equation*}
P_{\ell n}=\frac{nP_{\ell}}{2^n}\int_{-1}^{1}(Q_{\ell}+2\sqrt{2} P_{\ell} x)^{n-1}dx
\end{equation*}
and
\begin{equation*}
Q_{\ell n}=\frac{1}{2^n}\int_{-1}^{1}(Q_{\ell}+2\sqrt{2}(n+1) P_{\ell} x)(Q_{\ell}+2\sqrt{2} P_{\ell} x)^{n-1}dx,
\end{equation*}
where $\ell$ and $n$ are  non-negative integers.

The generalization of Pell and Pell-Lucas numbers was found by studying the recursion and introduced by Catarino \cite{Cat2013}, and  Catarino and Vasco \cite{CatVas2013}  in 2013; see more details in Section \ref{Preliminaries}. According to Stewart \cite{Ste2022} and Nilsrakoo \cite{ANil2024}, we give new integral representations  of the $k$-Pell  and $k$-Pell-Lucas numbers by using Binet’s formulas to establish some identities and simple integral calculus to prove them.
\section{Preliminaries}\label{Preliminaries}
In this section, we briefly recall some concepts and results that are required for the proofs of the main results.  Let $k$ and $n$ be non-negative integers with $k\neq 0$. In 2013, Catarino \cite{Cat2013} introduced and studied a generalization of Pell numbers as follows: the $k$-Pell number $P_{k,n}$ is defined by the recurrence relation
\begin{equation}\label{k- Pell-eq}
P_{k,n}=2P_{k,n-1}+kP_{k,n-2}
\end{equation}
for $n\geq 2$ with $P_{k,0}=0$ and $P_{k,1}=1$.	Subsequently, Catarino and Vasco \cite{CatVas2013} introduced and studied a generalization of Pell-Lucas numbers as follows: the $k$-Pell-Lucas number $Q_{k,n}$ is defined by the recurrence relation
\begin{equation}\label{k- Pell_Lucas-eq}
Q_{k,n}=2Q_{k,n-1}+kQ_{k,n-2},
\end{equation}
for $n\geq 2$ with $Q_{k,0}=2$ and $Q_{k,1}=2$. The tables presented below contain initial terms of the
sequences $\set{P_{k,n}}$ and $\set{Q_{k,n}}$ for selected values of
$k$ (Tables \ref{table-cpkn} and \ref{table-cqkn}).
\begin{table}[h]
\caption{Initial terms of the
$k$-Pell numbers $\set{P_{k,n}}$}\label{table-cpkn}
	\centering
	\begin{tabular}{lrrrrrrrrrrr}
		\hline\noalign{\smallskip}
		$n$&0&1 & 2&3&4&5&6&7&8&9&10\\\noalign{\smallskip}\hline\hline\noalign{\smallskip}
$P_{1,n}$&0&	1&	2&	5&	12&	29&	70&	169&	408&	985&	2,378\\
$P_{2,n}$&0&	1&	2&	6&	16&	44&	120&	328&	896&	2,448&	6,688
\\
$P_{3,n}$&0&	1&	2&	7&	20&	61&	182&	547&	1,640&	4,921&	14,762
\\
$P_{4,n}$&0&	1&	2&	8&	24&	80&	256&	832&	2,688&	8,704&	28,160
\\
$P_{5,n}$&0&	1&	2&	9&	28&	101&	342&	1,189&	4,088&	14,121&	48,682
\\
$P_{6,n}$&0&	1&	2&	10&	32&	124&	440&	1,624&	5,888&	21,520&	78,368 \\
	\noalign{\smallskip}	\hline
	\end{tabular}
\end{table}
\begin{table}[h]
\caption{Initial terms of the
$k$-Pell-Lucas numbers $\set{Q_{k,n}}$}\label{table-cqkn}
	\centering
	\begin{tabular}{lrrrrrrrrrrr}
		\hline\noalign{\smallskip}
		$n$&0&1 & 2&3&4&5&6&7&8&9&10\\\noalign{\smallskip}\hline\hline\noalign{\smallskip}
$Q_{1,n}$&2&	2&	6&	14&	34&	82&	198&	478&	1,154&	2,786&	6,726
\\
$Q_{2,n}$&2&	2&	8&	20&	56&	152&	416&	1,136&	3,104&	8,480&	23,168
\\
$Q_{3,n}$&2&	2&	10&	26&	82&	242&	730&	2,186&	6,562&	19,682&	59,050
\\
$Q_{4,n}$&2&	2&	12&	32&	112&	352&	1,152&	3,712&	12,032&	38,912&	125,952
\\
$Q_{5,n}$&2&	2&	14&	38&	146&	482&	1,694&	5,798&	20,066&	69,122&	238,574
\\
$Q_{6,n}$&2&	2&	16&	44&	184&	632&	2,368&	8,528&	31,264&	113,696&	414,976\\
	\noalign{\smallskip}	\hline
	\end{tabular}
\end{table}

As we can see, for $k=1$, the classical Pell and classical Pell–Lucas numbers are obtained. If $k=4$, the classical Fibonacci  and classical Lucas numbers appear as follows: $F_n=\frac{1}{2^{n-1}}P_{4,n}$ and $L_n=\frac{1}{2^{n}}Q_{4,n}$. Moreover, sequences
$\set{P_{2,n}}$, $\set{P_{3,n}}$, $\set{P_{4,n}}$, and  $\set{P_{5,n}}$ are listed in The Online Encyclopaedia of Integer Sequences \cite{OEIS2024} under the symbols A002605, A015518,
A085449, and A002532, respectively, while sequences $\set{Q_{2,n}}$, $\set{Q_{3,n}}$, $\set{Q_{4,n}}$, and  $\set{Q_{6,n}}$ under the symbols A080040,  A102345, A087131, and  A127226, respectively.

The recurrence relations \eqref{k- Pell-eq} and \eqref{k- Pell_Lucas-eq} generate characteristic
equations of the form
\begin{equation*}
r^2-2r-k=0.
\end{equation*}
Since $k\geq 1$, this equation has two roots, $r_1=1+\sqrt{1+k}$ and $r_2=1-\sqrt{1+k}$. Therefore, the Binet’s formulas  for the $k$-Pell numbers $\set{P_{k,n}}$ and the $k$-Pell-Lucas numbers $\set{Q_{k,n}}$ are
\begin{equation}\label{k-Pell-Binet-eq}
P_{k,n}=\frac{1}{2\sqrt{1+k}}\br{\varphi_k^n-\frac{(-k)^n}{\varphi_k^n}}
\end{equation}
and
\begin{equation}\label{k-Pell-Lucas-Binet-eq}
Q_{k,n}=\varphi_k^n+\frac{(-k)^n}{\varphi_k^n},
\end{equation}
where $\varphi_k=1+\sqrt{1+k}$; see also \cite[Proposition 1]{Cat2013} and \cite[Proposition 1]{CatVas2013}.

The following lemmas are our main tools for proving the integral representations of  the $k$-Pell and  $k$-Pell-Lucas numbers.
\begin{lemma}\label{lemma2.1}Let $k$ and $n$ be non-negative integers with $k\neq 0$. Then the following hold:
\begin{enumerate}[label=(\roman*)]
       \item  \label{lemma2.1-1} $Q_{k,n}+2\sqrt{1+k}\,P_{k,n}=2\varphi_k^n;$
       \item  \label{lemma2.1-2} $Q_{k,n}-2\sqrt{1+k}\,P_{k,n}=2\frac{(-k)^n}{\varphi_k^n};$
        \item  \label{lemma2.1-3} $Q_{k,n}^2-4(1+k)P_{k,n}^2=4(-k)^n$.
     \end{enumerate}
\end{lemma}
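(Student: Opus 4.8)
The plan is to substitute the Binet's formulas \eqref{k-Pell-Binet-eq} and \eqref{k-Pell-Lucas-Binet-eq} directly and simplify. For part \ref{lemma2.1-1}, I would form the combination $Q_{k,n}+2\sqrt{1+k}\,P_{k,n}$ and observe that the prefactor $2\sqrt{1+k}$ multiplying $P_{k,n}$ exactly cancels the normalizing factor $\frac{1}{2\sqrt{1+k}}$ appearing in the Binet formula for $P_{k,n}$. This reduces $2\sqrt{1+k}\,P_{k,n}$ to $\varphi_k^n-\frac{(-k)^n}{\varphi_k^n}$; adding $Q_{k,n}=\varphi_k^n+\frac{(-k)^n}{\varphi_k^n}$ then cancels the two $\frac{(-k)^n}{\varphi_k^n}$ terms and leaves $2\varphi_k^n$.

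For part \ref{lemma2.1-2}, I would repeat the same computation but with the opposite sign. The quantity $2\sqrt{1+k}\,P_{k,n}$ again simplifies to $\varphi_k^n-\frac{(-k)^n}{\varphi_k^n}$, and subtracting it from $Q_{k,n}$ now cancels the two $\varphi_k^n$ terms while doubling the remaining term, giving $2\frac{(-k)^n}{\varphi_k^n}$.

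For part \ref{lemma2.1-3}, rather than expanding the squares directly, I would recognize the left-hand side as a difference of squares and factor it as
\begin{equation*}
Q_{k,n}^2-4(1+k)P_{k,n}^2=\br{Q_{k,n}+2\sqrt{1+k}\,P_{k,n}}\br{Q_{k,n}-2\sqrt{1+k}\,P_{k,n}}.
\end{equation*}
Substituting the results of parts \ref{lemma2.1-1} and \ref{lemma2.1-2} turns the right-hand side into the product $2\varphi_k^n\cdot 2\frac{(-k)^n}{\varphi_k^n}$, in which the powers of $\varphi_k$ cancel, leaving $4(-k)^n$.

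Since every step is an elementary algebraic identity in the quantities $\varphi_k^n$ and $\frac{(-k)^n}{\varphi_k^n}$, there is no genuine obstacle here; the only point requiring care is tracking the constant $2\sqrt{1+k}$ so that it cancels cleanly against the normalizing factor in the Binet formula for $P_{k,n}$, and using the factored form in part \ref{lemma2.1-3} to avoid the messier route of expanding and recombining the squared Binet expressions.
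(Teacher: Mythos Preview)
Your proposal is correct and follows essentially the same approach as the paper: add and subtract the Binet formulas \eqref{k-Pell-Binet-eq} and \eqref{k-Pell-Lucas-Binet-eq} to obtain \ref{lemma2.1-1} and \ref{lemma2.1-2}, then factor the left-hand side of \ref{lemma2.1-3} as a difference of squares and apply the first two parts. The paper's proof is line-for-line the same argument.
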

\begin{proof}\begin{enumerate}[label=(\roman*)]
                 \item   Combining Binet’s formulas \eqref{k-Pell-Lucas-Binet-eq} and \eqref{k-Pell-Binet-eq} gives
          \begin{align*}
          Q_{k,n}+2\sqrt{1+k}\,P_{k,n}
          =\br{\varphi_k^n+\frac{(-k)^n}{\varphi_k^n}}+\br{\varphi_k^n-\frac{(-k)^n}{\varphi_k^n}}  =2\varphi_k^n.
          \end{align*}
                \item  Subtracting Binet’s formulas \eqref{k-Pell-Lucas-Binet-eq} and \eqref{k-Pell-Binet-eq} gives
                 \begin{align*}
          Q_{k,n}-2\sqrt{1+k}\,P_{k,n}
          =\br{\varphi_k^n+\frac{(-k)^n}{\varphi_k^n}}-\br{\varphi_k^n-\frac{(-k)^n}{\varphi_k^n}}  =2\frac{(-k)^n}{\varphi_k^n}.
          \end{align*}
                 \item It follows from  \ref{lemma2.1-1} and \ref{lemma2.1-2} that
                      \begin{align*}
          Q_{k,n}^2-4(1+k)P_{k,n}^2&= Q_{k,n}^2-\br{2\sqrt{1+k}P_{k,n}}^2\\
          &=\br{Q_{k,n}+2\sqrt{1+k}\,P_{k,n}}\br{Q_{k,n}-2\sqrt{1+k}\,P_{k,n}}\\
&=\br{2\varphi_k^n}{2\frac{(-k)^n}{\varphi_k^n}}\\
&=4(-k)^n.
          \end{align*}
  \end{enumerate}
This completes the proof.
\end{proof}
\begin{lemma}\label{lemma2.2}Let $k$, $m$, and $n$ be non-negative integers with $k\neq 0$. Then the following hold:
\begin{enumerate}[label=(\roman*)]
       \item \label{lemma2.2-1} $2P_{k,m+n}=P_{k,m}Q_{k,n}+P_{k,n}Q_{k,m};$
       \item \label{lemma2.2-2} $2Q_{k,m+n}=Q_{k,m}Q_{k,n}+4(1+k)P_{k,m}P_{k,n}.$
     \end{enumerate}
\end{lemma}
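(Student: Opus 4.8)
The plan is to prove both identities in parallel by invoking the factored forms established in Lemma~\ref{lemma2.1} rather than expanding Binet's formulas from scratch. Writing $u_n := Q_{k,n}+2\sqrt{1+k}\,P_{k,n}$ and $v_n := Q_{k,n}-2\sqrt{1+k}\,P_{k,n}$, parts \ref{lemma2.1-1} and \ref{lemma2.1-2} tell us that $u_n=2\varphi_k^n$ and $v_n=2(-k)^n/\varphi_k^n$. The key structural fact is that these quantities are, up to the factor $2$, purely multiplicative in the index: indeed $u_mu_n=4\varphi_k^{m+n}=2u_{m+n}$ and likewise $v_mv_n=2v_{m+n}$.

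First I would expand the products $u_mu_n$ and $v_mv_n$ directly in terms of $P_{k,\cdot}$ and $Q_{k,\cdot}$. Each expansion produces the two symmetric terms $Q_{k,m}Q_{k,n}$ and $4(1+k)P_{k,m}P_{k,n}$ together with the mixed term $\pm 2\sqrt{1+k}\br{Q_{k,m}P_{k,n}+P_{k,m}Q_{k,n}}$, the sign depending on whether we look at $u$ or at $v$. Adding the two expansions cancels the mixed terms and leaves $u_mu_n+v_mv_n=2Q_{k,m}Q_{k,n}+8(1+k)P_{k,m}P_{k,n}$; subtracting them cancels the symmetric terms and leaves $u_mu_n-v_mv_n=4\sqrt{1+k}\br{Q_{k,m}P_{k,n}+P_{k,m}Q_{k,n}}$.

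To finish, I would compare these with the multiplicative identities. Since $u_{m+n}+v_{m+n}=2Q_{k,m+n}$, the addition step gives $u_mu_n+v_mv_n=2(u_{m+n}+v_{m+n})=4Q_{k,m+n}$, whence $Q_{k,m}Q_{k,n}+4(1+k)P_{k,m}P_{k,n}=2Q_{k,m+n}$, which is \ref{lemma2.2-2}; and since $u_{m+n}-v_{m+n}=4\sqrt{1+k}\,P_{k,m+n}$, the subtraction step gives $u_mu_n-v_mv_n=2(u_{m+n}-v_{m+n})=8\sqrt{1+k}\,P_{k,m+n}$, whence $Q_{k,m}P_{k,n}+P_{k,m}Q_{k,n}=2P_{k,m+n}$, which is \ref{lemma2.2-1}. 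Alternatively, one could substitute the Binet formulas \eqref{k-Pell-Binet-eq} and \eqref{k-Pell-Lucas-Binet-eq} directly and watch the four cross terms cancel in pairs, but routing everything through Lemma~\ref{lemma2.1} keeps the algebra symmetric and transparent.

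There is no genuine conceptual obstacle here; the entire content is the bookkeeping of the four-term products. The only place to be careful is in tracking the factors of $2$ and $\sqrt{1+k}$, so that the final constants $2$, $4(1+k)$, and the leading coefficients on $P_{k,m+n}$ and $Q_{k,m+n}$ come out correctly — an arithmetic rather than a mathematical difficulty.
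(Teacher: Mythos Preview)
Your proof is correct. The paper takes the direct route you mention at the end: it substitutes the Binet formulas \eqref{k-Pell-Binet-eq} and \eqref{k-Pell-Lucas-Binet-eq} into each of the right-hand sides $P_{k,m}Q_{k,n}+P_{k,n}Q_{k,m}$ and $Q_{k,m}Q_{k,n}+4(1+k)P_{k,m}P_{k,n}$, expands the products of two-term expressions, watches the cross terms cancel, and recognises the resulting expressions as $2P_{k,m+n}$ and $2Q_{k,m+n}$ via Binet again.

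Your organisation is different: instead of working term by term, you pass to the auxiliary sequences $u_n$ and $v_n$, exploit the multiplicativity $u_mu_n=2u_{m+n}$ and $v_mv_n=2v_{m+n}$ coming from Lemma~\ref{lemma2.1}, and then recover both identities simultaneously by taking the sum and difference. The underlying algebra is identical (your $u_n,v_n$ are precisely $2\varphi_k^n$ and $2(-k)^n/\varphi_k^n$, so the multiplicativity step \emph{is} the Binet substitution), but your packaging makes the cancellation mechanism explicit and handles \ref{lemma2.2-1} and \ref{lemma2.2-2} in one stroke rather than two separate computations. Either way the content is routine bookkeeping, as you note.
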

\begin{proof}Using Binet’s formulas  \eqref{k-Pell-Binet-eq} and  \eqref{k-Pell-Lucas-Binet-eq},  we obtain
          \begin{align*}
         P_{k,m}Q_{k,n}+P_{k,n}Q_{k,m}
         &=\br{\frac{1}{2\sqrt{1+k}}\br{\varphi_k^m-\frac{(-k)^m}{\varphi_k^m}}}\br{\varphi_k^n
         +\frac{(-k)^n}{\varphi_k^n}}\\
        &\quad+\br{\frac{1}{2\sqrt{1+k}}\br{\varphi_k^n-\frac{(-k)^n}{\varphi_k^n}}}        \br{\varphi_k^m+\frac{(-k)^m}{\varphi_k^m}}\\
        &=\frac{1}{\sqrt{1+k}}\br{\varphi_k^{m+n}-\frac{(-k)^{m+n}}{\varphi_k^{m+n}}}\\
         &=2P_{k,m+n}
          \end{align*}
and
                      \begin{align*}
        Q_{k,m}&Q_{k,n}+4(1+k)P_{k,m}P_{k,n}\\&=\br{\varphi_k^m+\frac{(-k)^m}{\varphi_k^m}}\br{\varphi_k^n
         +\frac{(-k)^n}{\varphi_k^n}}\\
        &\quad+4(1+k)\br{\frac{1}{2\sqrt{1+k}}\br{\varphi_k^m-\frac{(-k)^m}{\varphi_k^m}}}\br{\frac{1}{2\sqrt{1+k}}\br{\varphi_k^n-\frac{(-k)^n}{\varphi_k^n}}}        \\
        &=2\br{\varphi_k^{m+n}+\frac{(-k)^{m+n}}{\varphi_k^{m+n}}}\\&=2Q_{k,m+n}.
          \end{align*}
Hence, \ref{lemma2.2-1} and  \ref{lemma2.2-2} complete the proof.
\end{proof}
\section{Main Results}
In this section, thanks to the technique of \cite{Ste2022},  we obtain new integral representations for the $k$-Pell and $k$-Pell-Lucas numbers. We start with the integral representation for the $k$-Pell number $P_{k,\ell n}$ which can be found by employing other known relations between the two numbers $P_{k, \ell}$ and $Q_{k,\ell}$.
\begin{theorem}\label{C-k-Pell-Theorem1}For $k$, $\ell$, and $n$ are non-negative integers with $k\neq 0$, the $k$-Pell numbers $P_{k,\ell n}$ can be represented by the integral
\begin{equation}\label{ckPell}
P_{k,\ell n}=\frac{nP_{k,\ell}}{2^n}\int_{-1}^{1}(Q_{k,\ell}+2\sqrt{1+k}\,P_{k,\ell} x)^{n-1}dx.
\end{equation}
\end{theorem}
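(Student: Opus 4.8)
The plan is to evaluate the right-hand side of \eqref{ckPell} directly and recognize the result as the Binet expression \eqref{k-Pell-Binet-eq} for $P_{k,\ell n}$. Throughout I would abbreviate $a=Q_{k,\ell}$ and $b=2\sqrt{1+k}\,P_{k,\ell}$, so that the integrand is the polynomial $(a+bx)^{n-1}$. First I would dispose of the degenerate cases: if $\ell=0$ then $P_{k,\ell}=0$, whence $b=0$ and the prefactor $nP_{k,\ell}/2^n$ forces the right-hand side to vanish, while $P_{k,0}=0$ makes the left-hand side vanish as well; likewise $n=0$ gives $0=0$. Hence it suffices to treat $\ell,n\geq 1$, for which $P_{k,\ell}\neq 0$ and so $b\neq 0$.

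With $b\neq 0$ the antiderivative of $(a+bx)^{n-1}$ is $(a+bx)^n/(bn)$, so a single substitution yields
\begin{equation*}
\int_{-1}^{1}(a+bx)^{n-1}\,dx=\frac{(a+b)^n-(a-b)^n}{bn}.
\end{equation*}
Multiplying by $nP_{k,\ell}/2^n$ and cancelling $n$ together with the factor $P_{k,\ell}$ hidden in $b$, the right-hand side of \eqref{ckPell} collapses to $\br{(a+b)^n-(a-b)^n}/\br{2^{n+1}\sqrt{1+k}}$.

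The key step is then to identify $a\pm b$ via Lemma \ref{lemma2.1}. Parts \ref{lemma2.1-1} and \ref{lemma2.1-2} give exactly $a+b=Q_{k,\ell}+2\sqrt{1+k}\,P_{k,\ell}=2\varphi_k^\ell$ and $a-b=Q_{k,\ell}-2\sqrt{1+k}\,P_{k,\ell}=2(-k)^\ell/\varphi_k^\ell$. Raising these to the $n$-th power produces $(a+b)^n=2^n\varphi_k^{\ell n}$ and $(a-b)^n=2^n(-k)^{\ell n}/\varphi_k^{\ell n}$; the common factor $2^n$ cancels against $2^{n+1}$, leaving precisely
\begin{equation*}
\frac{1}{2\sqrt{1+k}}\br{\varphi_k^{\ell n}-\frac{(-k)^{\ell n}}{\varphi_k^{\ell n}}}=P_{k,\ell n}
\end{equation*}
by Binet's formula \eqref{k-Pell-Binet-eq}. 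Since every step after the integral evaluation is pure substitution, there is no genuine analytic obstacle here; the only point demanding care is the bookkeeping of the degenerate cases $\ell=0$ and $n=0$, where the division by $b$ is illegitimate and the identity must instead be read off directly from both sides being zero.
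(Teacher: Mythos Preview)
Your proof is correct and follows essentially the same route as the paper: evaluate the integral via the antiderivative $(a+bx)^n/(bn)$, identify the endpoint values $a\pm b$ using Lemma~\ref{lemma2.1}\ref{lemma2.1-1}--\ref{lemma2.1-2}, and recognise the resulting expression as Binet's formula~\eqref{k-Pell-Binet-eq} for $P_{k,\ell n}$. Your handling of the degenerate cases $\ell=0$ and $n=0$ is in fact slightly more explicit than the paper's, which simply asserts ``we have done''.
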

\begin{proof}For $n=0$ or $\ell=0$, we have done. Let us assume that $\ell, n>0$, straightforward simple integration leads to
\begin{align}\label{int-pln-eq1}
 &\int_{-1}^{1}(Q_{k,\ell}+2\sqrt{1+k}\, P_{k,\ell} x)^{n-1}dx\nonumber\\
 &=\frac{1}{2n\sqrt{1+k}\,P_{k,\ell}}\mat{(Q_{k,\ell}+2\sqrt{1+k}\, P_{k,\ell} x)^{n}}_{-1}^{1}\nonumber\\ &=\frac{1}{2n\sqrt{1+k}\,P_{k,\ell}}\mat{(Q_{k,\ell}+2\sqrt{1+k} \,P_{k,\ell})^{n}-(Q_{k,\ell}-2\sqrt{1+k}\, P_{k,\ell})^n}.
\end{align}
Applying \ref{lemma2.1-1} and \ref{lemma2.1-2} of Lemma \ref{lemma2.1}  in  \eqref{int-pln-eq1} with $n$ replaced with $\ell$, we get
\begin{align*}
 \int_{-1}^{1}(Q_{k,\ell}+2\sqrt{1+k}\, P_{k,\ell} x)^{n-1}dx&=\frac{1}{2n\sqrt{1+k}\,P_{k,\ell}}\mat{\br{2\varphi_k^\ell}^{n}-\br{2\frac{(-k)^\ell}{\varphi_k^\ell}}^n}\,\nonumber\\
 &=\frac{2^n}{nP_{k,\ell}}\mat{\frac{1}{2\sqrt{1+k}}\br{\varphi_k^{\ell n}
 -\frac{(-k)^{\ell n}}{\varphi_k^{\ell n}}}}.
\end{align*}
It follows from \eqref{k-Pell-Binet-eq} with replace $n$ by $\ell n$ that
\begin{align*}
 \int_{-1}^{1}(Q_{k,\ell}+2\sqrt{1+k}\, P_{k,\ell} x)^{n-1}dx
&=\frac{2^n}{nP_{k,\ell}}P_{k,\ell n}.
\end{align*}
Then  \eqref{ckPell} has been proved.
\end{proof}

The integral representations of the $k$-Pell number for even and odd orders are shown as follows:
\begin{corollary}Let $k$ and $n$ be non-negative integers with $k\neq 0$.
\begin{enumerate}[label=(\roman*)]
       \item The $k$-Pell numbers $P_{k,2n}$ can be represented by the integral
\begin{equation}\label{ckPelleven}
P_{k,2n}=n\int_{-1}^{1}(k+2+2\sqrt{1+k}\,x)^{n-1}dx.
\end{equation}
       \item The $k$-Pell numbers $P_{k,2n+1}$ can be represented by the integral
       \begin{equation*}
P_{k,2n+1}=\frac{1}{2}\int_{-1}^{1}(2n+k+2+2(n+1)\sqrt{1+k}\,x)(k+2+2\sqrt{1+k}\,x)^{n-1}dx.
\end{equation*}
     \end{enumerate}
\end{corollary}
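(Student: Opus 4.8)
The plan is to derive both representations from Theorem \ref{C-k-Pell-Theorem1} together with the identities of Section \ref{Preliminaries}, so that no new integration technique is needed beyond what already appears in the theorem's proof. For part (i) I would simply specialize \eqref{ckPell} to $\ell=2$. The recurrence \eqref{k- Pell-eq} gives $P_{k,2}=2P_{k,1}+kP_{k,0}=2$, and \eqref{k- Pell_Lucas-eq} gives $Q_{k,2}=2Q_{k,1}+kQ_{k,0}=2(k+2)$, so \eqref{ckPell} becomes
\[
P_{k,2n}=\frac{2n}{2^{n}}\int_{-1}^{1}\bigl(2(k+2)+4\sqrt{1+k}\,x\bigr)^{n-1}dx.
\]
Factoring a $2$ out of the bracket produces a factor $2^{n-1}$, which cancels against $2^{n}$ to leave exactly $n$, yielding \eqref{ckPelleven}. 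The even case therefore costs essentially nothing.

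For part (ii) the index $2n+1$ is not of the form $\ell n$, so Theorem \ref{C-k-Pell-Theorem1} does not apply directly; instead I would analyze the integrand itself. Writing $a=k+2$ and $b=2\sqrt{1+k}$, the integrand is $\bigl(2n+a+(n+1)bx\bigr)(a+bx)^{n-1}$, and the key observation is that it splits as
\[
\bigl(2n+a+(n+1)bx\bigr)(a+bx)^{n-1}=2n\,(a+bx)^{n-1}+\frac{d}{dx}\Bigl[x(a+bx)^{n}\Bigr],
\]
since the product rule gives $\frac{d}{dx}[x(a+bx)^{n}]=(a+bx)^{n-1}\bigl(a+(n+1)bx\bigr)$. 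Integrating the first summand over $[-1,1]$ and comparing with \eqref{ckPelleven} yields $2n\int_{-1}^{1}(a+bx)^{n-1}dx=2P_{k,2n}$, while the second summand integrates to the boundary value $[x(a+bx)^{n}]_{-1}^{1}=(a+b)^{n}+(a-b)^{n}$.

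It remains to evaluate this boundary value and assemble the pieces. Parts \ref{lemma2.1-1} and \ref{lemma2.1-2} of Lemma \ref{lemma2.1} at $\ell=2$ give $a+b=\varphi_k^{2}$ and $a-b=(-k)^{2}/\varphi_k^{2}$, so $(a+b)^{n}+(a-b)^{n}=\varphi_k^{2n}+(-k)^{2n}/\varphi_k^{2n}=Q_{k,2n}$ by \eqref{k-Pell-Lucas-Binet-eq}. Hence the whole integral equals $2P_{k,2n}+Q_{k,2n}$, and to finish I would apply the addition formula in part \ref{lemma2.2-1} of Lemma \ref{lemma2.2} with the two indices $2n$ and $1$: using $Q_{k,1}=2$ and $P_{k,1}=1$ it reads $2P_{k,2n+1}=P_{k,2n}Q_{k,1}+P_{k,1}Q_{k,2n}=2P_{k,2n}+Q_{k,2n}$. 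Thus the integral equals $2P_{k,2n+1}$, and dividing by $2$ gives the claim (the boundary case $n=0$, where the splitting degenerates to $\frac{d}{dx}[x]=1$, checks directly). The only genuinely inventive step is recognizing the integrand as a constant multiple of $(a+bx)^{n-1}$ plus an exact derivative; once that decomposition is in hand, each resulting piece is pinned down by part (i), a Binet evaluation, or the addition identity, so I anticipate no serious obstacle.
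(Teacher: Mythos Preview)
Your argument is correct. Part (i) is identical to the paper's: specialize \eqref{ckPell} to $\ell=2$ using $P_{k,2}=2$, $Q_{k,2}=2(k+2)$, and simplify.

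For part (ii), however, you and the paper take genuinely different routes. The paper never decomposes the integrand; instead it applies part (i) twice, at indices $n$ and $n+1$, to obtain integral expressions for $P_{k,2n}$ and $P_{k,2n+2}$, and then invokes the defining recurrence $P_{k,2n+2}=2P_{k,2n+1}+kP_{k,2n}$ to solve for $P_{k,2n+1}$. Combining the two integrals and simplifying $(n+1)(k+2+2\sqrt{1+k}\,x)-nk$ produces the stated integrand directly. Your approach instead reads the integrand structurally as $2n(a+bx)^{n-1}+\frac{d}{dx}\bigl[x(a+bx)^n\bigr]$, evaluates the two pieces as $2P_{k,2n}$ and $Q_{k,2n}$ via part (i) and Lemma~\ref{lemma2.1}, and closes with the addition identity of Lemma~\ref{lemma2.2}\ref{lemma2.2-1}. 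The paper's method is more elementary in that it uses only the recurrence and part (i), with no appeal to Binet's formulas or the $Q$-sequence; your method, on the other hand, explains \emph{why} the integrand has that particular shape (constant-plus-exact-derivative) and ties the odd-index formula to the companion sequence $Q_{k,2n}$, which is a pleasant structural insight.
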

\begin{proof}\begin{enumerate}[label=(\roman*)]
       \item  Notice that $P_{k,2}=2$ and $Q_{k,2}=2k+4$. Setting $\ell=2$ in \eqref{ckPell}, we have
\begin{align*}
P_{k,2n}&=\frac{nP_{k,2}}{2^n}\int_{-1}^{1}(Q_{k,2}+2\sqrt{1+k}\, P_{k,2} x)^{n-1}dx\\
&=\frac{n}{2^{n-1}}\int_{-1}^{1}(2k+4+4\sqrt{1+k}\,x)^{n-1}dx\\
&=n\int_{-1}^{1}(k+2+2\sqrt{1+k}\,x)^{n-1}dx.
\end{align*}
 \item Reindexing of $n$ by $n+1$ in \eqref{ckPelleven}, we get
 \begin{equation}\label{ckPelleven2}
P_{k,2n+2}=(n+1)\int_{-1}^{1}(k+2+2\sqrt{1+k}\,x)^{n}dx.
\end{equation}
Using $P_{k,2n+2}=2P_{k,2n+1}+kP_{k,2n}$ with \eqref{ckPelleven} and \eqref{ckPelleven2}, we obtain
\begin{align*}
&P_{k,2n+1}\\&=\frac{1}{2}\mat{(n+1)\int_{-1}^{1}(k+2+2\sqrt{1+k}\,x)^{n}dx
-nk\int_{-1}^{1}(k+2+2\sqrt{1+k}\,x)^{n-1}dx}\\
&=\frac{1}{2}\int_{-1}^{1}\br{(n+1)(k+2+2\sqrt{1+k}\,x)-nk}(k+2+2\sqrt{1+k}\,x)^{n-1}dx\\
&=\frac{1}{2}\int_{-1}^{1}(2n+k+2+2(n+1)\sqrt{1+k}\,x)(k+2+2\sqrt{1+k}\,x)^{n-1}dx.
\end{align*}
   \end{enumerate}
This completes the proof.
\end{proof}
Setting $k=1$ in Theorem \ref{C-k-Pell-Theorem1}, we have the following corollary.
\begin{corollary}[\cite{ANil2024}, Theorem 3.1] For $\ell$ and $n$ are non-negative integers, the Pell numbers $P_{\ell n}$ can be represented by the integral
\begin{equation*}
P_{\ell n}=\frac{nP_{\ell}}{2^n}\int_{-1}^{1}(Q_{\ell}+2\sqrt{2} P_{\ell} x)^{n-1}dx.
\end{equation*}
\end{corollary}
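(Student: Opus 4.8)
The plan is to read off this corollary as the specialization $k=1$ of Theorem \ref{C-k-Pell-Theorem1}. The only thing that genuinely needs checking is that every $k$-indexed quantity appearing in \eqref{ckPell} collapses to its classical Pell counterpart when $k=1$; once that identification is in place, the integral formula transfers verbatim, with $\sqrt{1+k}$ replaced by $\sqrt{2}$.

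First I would establish $P_{1,n}=P_n$ and $Q_{1,n}=Q_n$ for all $n$. The quickest route is through the Binet formulas. Setting $k=1$ gives $\varphi_1=1+\sqrt{1+1}=1+\sqrt{2}=\varphi$ and $(-k)^n=(-1)^n$, so \eqref{k-Pell-Binet-eq} becomes $P_{1,n}=\frac{1}{2\sqrt{2}}\br{\varphi^n-\frac{(-1)^n}{\varphi^n}}$, which is exactly the Binet formula for $P_n$ recorded in Section \ref{Introduction}; likewise \eqref{k-Pell-Lucas-Binet-eq} reduces to $Q_{1,n}=\varphi^n+\frac{(-1)^n}{\varphi^n}=Q_n$. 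Alternatively, one could argue by induction: the recurrence \eqref{k- Pell-eq} at $k=1$ reads $P_{1,n}=2P_{1,n-1}+P_{1,n-2}$ with $P_{1,0}=0$ and $P_{1,1}=1$, which is precisely the defining recurrence and the initial data of $P_n$, and the same comparison handles $Q_n$.

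With these identities in hand, I would substitute $k=1$ directly into \eqref{ckPell}. Then $\sqrt{1+k}=\sqrt{2}$, together with $P_{k,\ell n}=P_{\ell n}$, $P_{k,\ell}=P_\ell$, and $Q_{k,\ell}=Q_\ell$, turns the right-hand side into $\frac{nP_\ell}{2^n}\int_{-1}^{1}\br{Q_\ell+2\sqrt{2}\,P_\ell x}^{n-1}dx$, which is the asserted representation.

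I do not expect any real obstacle here: all the analytic content, namely the elementary integration and the application of Lemma \ref{lemma2.1}, was already carried out in the proof of Theorem \ref{C-k-Pell-Theorem1}, so the corollary is a pure substitution. The single point that merits a word of justification is the sequence identification $P_{1,n}=P_n$ and $Q_{1,n}=Q_n$, which the paper has in any case already flagged in Section \ref{Preliminaries}.
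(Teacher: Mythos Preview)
Your proposal is correct and follows the same approach as the paper: the paper's own proof simply observes that $P_{1,\ell n}=P_{\ell n}$, $P_{1,\ell}=P_{\ell}$, and $Q_{1,\ell}=Q_{\ell}$, and then invokes Theorem \ref{C-k-Pell-Theorem1}. Your argument is if anything more detailed, since you spell out why those identifications hold via the Binet formulas or the recurrences.
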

\begin{proof} Notice that $P_{1,\ell n}=P_{\ell n}$, $P_{1,\ell}=P_{\ell}$, and $Q_{1,\ell}=Q_\ell$. Then, by Theorem \ref{C-k-Pell-Theorem1}, the conclusion follows.
\end{proof}
Setting $k=4$ in Theorem \ref{C-k-Pell-Theorem1}, we have the following corollary.
\begin{corollary}[\cite{Ste2022}, Theorem 2.1] For $\ell$ and $n$ are non-negative integers, the Fibonacci numbers $F_{\ell n}$ can be represented by the integral
\begin{equation*}
F_{\ell n}=\frac{nF_{\ell}}{2^n}\int_{-1}^{1}(L_{\ell}+\sqrt{5} F_{\ell} x)^{n-1}dx.
\end{equation*}
\end{corollary}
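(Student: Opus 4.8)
The plan is to specialize Theorem~\ref{C-k-Pell-Theorem1} at $k=4$ and then translate the resulting identity out of the $k$-Pell and $k$-Pell--Lucas numbers and into the classical Fibonacci and Lucas numbers, using the conversion relations $F_m=\frac{1}{2^{m-1}}P_{4,m}$ and $L_m=\frac{1}{2^{m}}Q_{4,m}$ recorded in Section~\ref{Preliminaries}. Since $k=4$ gives $\sqrt{1+k}=\sqrt5$, Theorem~\ref{C-k-Pell-Theorem1} becomes
\[
P_{4,\ell n}=\frac{nP_{4,\ell}}{2^n}\int_{-1}^{1}\br{Q_{4,\ell}+2\sqrt5\,P_{4,\ell}x}^{n-1}dx.
\]

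First I would rewrite every term using $P_{4,m}=2^{m-1}F_m$ and $Q_{4,m}=2^{m}L_m$. The crucial algebraic observation is that after substituting $P_{4,\ell}=2^{\ell-1}F_\ell$ and $Q_{4,\ell}=2^{\ell}L_\ell$ the integrand factors cleanly, because the same power of $2$ appears in both terms:
\[
Q_{4,\ell}+2\sqrt5\,P_{4,\ell}x=2^{\ell}L_\ell+2^{\ell}\sqrt5\,F_\ell x=2^{\ell}\br{L_\ell+\sqrt5\,F_\ell x}.
\]
Raising this to the power $n-1$ pulls out a factor $2^{\ell(n-1)}$, leaving the integral $\int_{-1}^{1}\br{L_\ell+\sqrt5\,F_\ell x}^{n-1}dx$ that appears in the target formula.

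Next I would collect all the powers of two. On the right-hand side the prefactor contributes $2^{\ell-1}/2^n$ and the integrand contributes $2^{\ell(n-1)}$, while on the left-hand side $P_{4,\ell n}=2^{\ell n-1}F_{\ell n}$. Cancelling the common factor $2^{\ell n-1}$ from both sides then leaves precisely $F_{\ell n}=\frac{nF_\ell}{2^n}\int_{-1}^{1}\br{L_\ell+\sqrt5\,F_\ell x}^{n-1}dx$, which is the claim. The degenerate cases $n=0$ or $\ell=0$ are immediate, exactly as in the proof of Theorem~\ref{C-k-Pell-Theorem1}.

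The only genuine obstacle is the exponent bookkeeping: one must verify that the combined power of two on the right-hand side, namely $2^{\ell-1}\cdot 2^{\ell(n-1)}/2^n$, divided by the left-hand factor $2^{\ell n-1}$, simplifies to exactly $2^{-n}$, which holds because $(\ell-1)+\ell(n-1)-n-(\ell n-1)=-n$. Everything else is a direct substitution, so there is no real difficulty beyond tracking these powers of $2$ carefully.
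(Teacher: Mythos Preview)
Your proposal is correct and follows essentially the same route as the paper: specialize Theorem~\ref{C-k-Pell-Theorem1} at $k=4$, substitute $P_{4,m}=2^{m-1}F_m$ and $Q_{4,m}=2^{m}L_m$, factor $2^{\ell}$ out of the integrand, and cancel the resulting powers of two. The paper's proof is slightly terser about the exponent bookkeeping, but the argument is identical.
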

\begin{proof} Notice that $P_{4,\ell n}=2^{\ell n-1}F_{\ell n}$, $P_{4,\ell}=2^{\ell -1}F_{\ell}$, and $Q_{4,\ell}=2^\ell L_\ell$. Setting $k=4$ in \eqref{ckPell}, we get
\begin{align*}
F_{\ell n}&=\frac{P_{4,\ell n}}{2^{\ell n-1}}=\frac{nP_{4,\ell}}{2^{\ell n-1+n}}\int_{-1}^{1}\br{Q_{4,\ell}+2\sqrt{1+4} P_{4,\ell} x}^{n-1}dx\\
&=\frac{n2^{\ell-1}F_\ell}{2^{\ell n-1+n}}\int_{-1}^{1}\br{2^\ell L_\ell+2^\ell\sqrt{5}F_\ell x}^{n-1}dx\\
&=\frac{nF_{\ell}}{2^n}\int_{-1}^{1}(L_{\ell}+\sqrt{5} F_{\ell} x)^{n-1}dx.
\end{align*}
This completes the proof.
\end{proof}

Next, we provide the integral representations for the $k$-Pell-Lucas numbers $Q_{k,\ell n}$ based on the two numbers $P_{k,\ell}$ and $Q_{k,\ell}$.
\begin{theorem}\label{C-k-Pell-Lucas-Theorem1}For $k$, $\ell$, and $n$ are non-negative integers with $k\neq 0$, the $k$-Pell-Lucas numbers $Q_{k,\ell n}$ can be represented by the integral
\begin{equation}\label{ckPell-Lucas}
Q_{k,\ell n}=\frac{1}{2^n}\int_{-1}^{1}(Q_{k,\ell}+2(n+1)\sqrt{1+k}\, P_{k,\ell} x)(Q_{k,\ell}+2\sqrt{1+k}\, P_{k,\ell} x)^{n-1}dx.
\end{equation}
\end{theorem}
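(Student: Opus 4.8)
The plan is to recognize the integrand on the right-hand side of \eqref{ckPell-Lucas} as an exact derivative, so that the integral collapses by the fundamental theorem of calculus, and then to turn the resulting boundary expression into $Q_{k,\ell n}$ through Lemma \ref{lemma2.1} together with the Binet formula \eqref{k-Pell-Lucas-Binet-eq}. To keep the notation light, I would abbreviate $a=Q_{k,\ell}$ and $b=2\sqrt{1+k}\,P_{k,\ell}$, so that the integrand reads $(a+(n+1)bx)(a+bx)^{n-1}$. This is exactly parallel to the proof of Theorem \ref{C-k-Pell-Theorem1}, except that here the extra factor $(n+1)$ is tailored to produce a perfect derivative rather than to be integrated directly.

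First I would dispose of the degenerate case $n=0$, where the claimed identity reduces to $Q_{k,0}=2$ and holds directly; thus I assume $n\geq 1$, in which case $(a+bx)^{n-1}$ is an honest polynomial and all the manipulations below are legitimate even when $\ell=0$ (so that $b=0$). For $n\geq 1$ the central computation is the identity
\begin{equation*}
\frac{d}{dx}\mat{x(a+bx)^n}=(a+bx)^n+nbx(a+bx)^{n-1}=(a+(n+1)bx)(a+bx)^{n-1},
\end{equation*}
which is precisely the integrand. Applying the fundamental theorem of calculus then gives
\begin{equation*}
\int_{-1}^{1}(a+(n+1)bx)(a+bx)^{n-1}\,dx=\mat{x(a+bx)^n}_{-1}^{1}=(a+b)^n+(a-b)^n.
\end{equation*}

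Next I would substitute back $a+b=Q_{k,\ell}+2\sqrt{1+k}\,P_{k,\ell}$ and $a-b=Q_{k,\ell}-2\sqrt{1+k}\,P_{k,\ell}$ and invoke parts \ref{lemma2.1-1} and \ref{lemma2.1-2} of Lemma \ref{lemma2.1} to get $a+b=2\varphi_k^\ell$ and $a-b=2(-k)^\ell/\varphi_k^\ell$. Hence the boundary expression equals
\begin{equation*}
\br{2\varphi_k^\ell}^n+\br{2\frac{(-k)^\ell}{\varphi_k^\ell}}^n=2^n\br{\varphi_k^{\ell n}+\frac{(-k)^{\ell n}}{\varphi_k^{\ell n}}}=2^nQ_{k,\ell n},
\end{equation*}
where the last step is \eqref{k-Pell-Lucas-Binet-eq} with $n$ replaced by $\ell n$. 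Dividing by $2^n$ yields \eqref{ckPell-Lucas}.

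The only real obstacle is spotting that the somewhat unusual coefficient $(n+1)$ in the linear factor is exactly what is needed to make the integrand a perfect derivative of $x(a+bx)^n$; once this is noticed, the proof is essentially immediate. Equivalently, and matching the style of Theorem \ref{C-k-Pell-Theorem1}, one could write $a+(n+1)bx=(a+bx)+nbx$, integrate $(a+bx)^n$ directly and integrate $nbx(a+bx)^{n-1}$ by parts, whereupon the two $(a\pm b)^{n+1}$ terms cancel; the exact-derivative viewpoint simply bypasses this cancellation.
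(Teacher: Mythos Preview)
Your proof is correct and, in fact, cleaner than the paper's. The key difference is how the integral is evaluated: you observe that the integrand is the exact derivative $\frac{d}{dx}\mat{x(a+bx)^n}$, so the fundamental theorem of calculus immediately yields $(a+b)^n+(a-b)^n$, after which Lemma~\ref{lemma2.1}\ref{lemma2.1-1}--\ref{lemma2.1-2} and the Binet formula \eqref{k-Pell-Lucas-Binet-eq} finish the job. The paper instead integrates by parts, taking $u=Q_{k,\ell}+2(n+1)\sqrt{1+k}\,P_{k,\ell}x$ and $dv=(Q_{k,\ell}+2\sqrt{1+k}\,P_{k,\ell}x)^{n-1}dx$; this leaves a residual integral $\int_{-1}^1(a+bx)^n\,dx$, which the paper identifies (via Theorem~\ref{C-k-Pell-Theorem1} with $n\mapsto n+1$) as a multiple of $P_{k,\ell n+\ell}$, and the resulting $P_{k,\ell n+\ell}$ terms must then be cancelled using the addition identity of Lemma~\ref{lemma2.2}\ref{lemma2.2-1}. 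Your route therefore avoids any dependence on Theorem~\ref{C-k-Pell-Theorem1} and Lemma~\ref{lemma2.2}, making the argument both shorter and self-contained; the paper's route, on the other hand, makes the structural link between the $P_{k,\ell n}$ and $Q_{k,\ell n}$ representations explicit. Your closing remark about the alternative split $a+(n+1)bx=(a+bx)+nbx$ is a third variant, still distinct from the paper's choice of parts, though it leads to a similar cancellation of $(a\pm b)^{n+1}$ terms.
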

\begin{proof}For $n=0$ or $\ell=0$, it is easy to see that \eqref{ckPell-Lucas} holds. We assume  now that $\ell, n>0$.  Replacing $n$ by $n+1$ in \eqref{ckPell} becomes
\begin{equation}\label{ckPell+}
P_{k,\ell n+\ell}=\frac{(n+1)P_{k,\ell}}{2^{n+1}}\int_{-1}^{1}(Q_{k,\ell}+2\sqrt{1+k}\, P_{k,\ell} x)^{n}dx.
\end{equation}
Integrating by part \eqref{ckPell-Lucas} and using \eqref{ckPell+}, we have
\begin{align*}
I&=\frac{1}{2^n}\int_{-1}^{1}(Q_{k,\ell}+2(n+1)\sqrt{1+k}\, P_{k,\ell} x)(Q_{k,\ell}+2\sqrt{1+k} \,P_{k,\ell} x)^{n-1}dx \\
&=\frac{1}{n2^{n+1}\sqrt{1+k}\, P_{k,\ell}}(Q_{k,\ell}+2\sqrt{1+k}\, P_{k,\ell} x)^{n}(Q_{k,\ell}+2(n+1)\sqrt{1+k}\, P_{k,\ell} x)\bigg|_{-1}^{1}\\
&\quad -\frac{n+1}{n2^n}\int_{-1}^{1}(Q_{k,\ell}+2\sqrt{1+k}\, P_{k,\ell} x)^{n}dx\\
&=\frac{1}{n2^{n+1}\sqrt{1+k}\, P_{k,\ell}}\mat{(Q_{k,\ell}+2\sqrt{1+k}\, P_{k,\ell})^{n}(Q_{k,\ell}+2(n+1) \sqrt{1+k}\,P_{k,\ell})}\\
&\quad -\frac{1}{n2^{n+1}\sqrt{1+k}\, P_{k,\ell}}\mat{(Q_{k,\ell}-2\sqrt{1+k}\, P_{k,\ell})^{n}(Q_{k,\ell}-2(n+1)\sqrt{1+k}\, P_{k,\ell})}\\
&\quad -\frac{2P_{k,\ell n+\ell}}{nP_{k,\ell}}.
\end{align*}
Applying \ref{lemma2.1-1} and  \ref{lemma2.1-2} of Lemma \ref{lemma2.1} to the righthand side of the above equation gives
\begin{align*}
I&=\frac{1}{n2^{n+1}\sqrt{1+k}\, P_{k,\ell}}\mat{2^n\varphi_k^{\ell n}\br{Q_{k,\ell}+2(n+1)\sqrt{1+k}\, P_{k,\ell}}}\\
&\quad -\frac{1}{n2^{n+1}\sqrt{1+k}\, P_{k,\ell}}\mat{2^n\frac{\br{-k}^{\ell n}}{\varphi_k^{\ell n}}\br{Q_{k,\ell}-2(n+1)\sqrt{1+k}\, P_{k,\ell}}}\\
&\quad -\frac{2P_{k,\ell n+\ell}}{nP_{k,\ell}}\\
&=\frac{1}{n\, P_{k,\ell}}\mat{\frac{1}{2\sqrt{1+k}}\br{\varphi_k^{\ell n}-\frac{\br{-k}^{\ell n}}{\varphi_k^{\ell n}}}Q_{k,\ell}+(n+1)P_{k,\ell}\br{\varphi_k^{\ell n}+\frac{\br{-k}^{\ell n}}{\varphi_k^{\ell n}}}}\\ &\quad-\frac{2P_{k,\ell n+\ell}}{nP_{k,\ell}}.
\end{align*}
Using both Binet’s formulas \eqref{k-Pell-Binet-eq} and \eqref{k-Pell-Lucas-Binet-eq} with $n$ replaced by $\ell n$ and Lemma \ref{lemma2.2}  \ref{lemma2.2-1} leads to
\begin{align*}
I&=\frac{1}{n\, P_{k,\ell}}\mat{P_{k,\ell n}Q_{k,\ell}+(n+1)P_{k,\ell}Q_{k,\ell n}}-\frac{2P_{k,\ell n+\ell}}{nP_{k,\ell}}\\
&=\frac{1}{n\, P_{k,\ell}}\mat{P_{k,\ell n}Q_{k,\ell}+P_{k,\ell}Q_{k,\ell n}+nP_{k,\ell}Q_{k,\ell n}}-\frac{2P_{k,\ell n+\ell}}{nP_{k,\ell}}\\
&=\frac{1}{n\, P_{k,\ell}}\mat{2P_{k,\ell n+\ell}+nP_{k,\ell}Q_{k,\ell n}}-\frac{2P_{k,\ell n+\ell}}{nP_{k,\ell}}\\
&=Q_{k,\ell n},
\end{align*}
which completes the proof.		
\end{proof}

Setting $k=1$ in Theorem \ref{C-k-Pell-Lucas-Theorem1}, we have the following corollary.
\begin{corollary}[\cite{ANil2024}, Theorem 3.4] For $\ell$ and $n$ are  non-negative integers, the Pell-Lucas numbers $Q_{\ell n}$ can be represented by the integral
\begin{equation*}
Q_{\ell n}=\frac{1}{2^n}\int_{-1}^{1}(Q_{\ell}+2\sqrt{2}(n+1) P_{\ell} x)(Q_{\ell}+2\sqrt{2} P_{\ell} x)^{n-1}dx.
\end{equation*}
\end{corollary}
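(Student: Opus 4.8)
The plan is to obtain this corollary as the special case $k=1$ of Theorem \ref{C-k-Pell-Lucas-Theorem1}, exactly paralleling the deduction of the $P_{\ell n}$ corollary given above. The first step I would carry out is to record the identifications that hold when $k=1$: the recurrence \eqref{k- Pell_Lucas-eq} becomes $Q_{1,n}=2Q_{1,n-1}+Q_{1,n-2}$ with $Q_{1,0}=2$ and $Q_{1,1}=2$, which is precisely the defining recurrence and initial data of the classical Pell-Lucas numbers, so that $Q_{1,\ell n}=Q_{\ell n}$ and $Q_{1,\ell}=Q_\ell$; similarly \eqref{k- Pell-eq} at $k=1$ reproduces the Pell numbers, giving $P_{1,\ell}=P_\ell$.

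With these identifications in hand, the second step is a direct substitution into \eqref{ckPell-Lucas}. Setting $k=1$ makes $\sqrt{1+k}=\sqrt{2}$, and replacing every $k$-indexed symbol by its classical counterpart turns the right-hand side of \eqref{ckPell-Lucas} into
\[
\frac{1}{2^n}\int_{-1}^{1}(Q_{\ell}+2(n+1)\sqrt{2}\,P_{\ell}x)(Q_{\ell}+2\sqrt{2}\,P_{\ell}x)^{n-1}dx,
\]
while the left-hand side becomes $Q_{\ell n}$. After the cosmetic rearrangement $2(n+1)\sqrt{2}=2\sqrt{2}(n+1)$, this is exactly the asserted identity.

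I expect there to be essentially no analytic obstacle here: the integration-by-parts computation and the appeals to Lemmas \ref{lemma2.1} and \ref{lemma2.2}, including the separate treatment of the degenerate cases $n=0$ and $\ell=0$, have already been discharged once and for all in the proof of Theorem \ref{C-k-Pell-Lucas-Theorem1}, so nothing needs to be re-derived. The only point that will require attention is the bookkeeping verification that the $k=1$ sequences coincide term-by-term with the classical Pell and Pell-Lucas sequences; this is immediate from the matching recurrences and seed values (and can be cross-checked against the first row of Table \ref{table-cqkn}), after which the corollary follows at once.
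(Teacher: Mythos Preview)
Your proposal is correct and follows exactly the approach indicated in the paper: the corollary is obtained simply by setting $k=1$ in Theorem~\ref{C-k-Pell-Lucas-Theorem1} and using the identifications $P_{1,n}=P_n$, $Q_{1,n}=Q_n$, $\sqrt{1+k}=\sqrt{2}$. The paper does not even supply a formal proof for this corollary, so your write-up is, if anything, more detailed than needed.
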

Setting $k=4$ in Theorem \ref{C-k-Pell-Lucas-Theorem1}, we have the following corollary.
\begin{corollary}[\cite{Ste2022}, Theorem 2.2] For $\ell$ and $n$ are  non-negative integers, the Lucas numbers $L_{\ell n}$ can be represented by the integral
\begin{equation*}
L_{\ell n}=\frac{1}{2^n}\int_{-1}^{1} (L_\ell+\sqrt{5}(n+1)F_\ell x)(L_\ell+\sqrt{5}F_\ell x)^{n-1} dx.
\end{equation*}
\end{corollary}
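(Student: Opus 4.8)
The plan is to obtain this identity as a direct specialization of Theorem~\ref{C-k-Pell-Lucas-Theorem1} at $k=4$, exactly paralleling the way the Fibonacci corollary was deduced from Theorem~\ref{C-k-Pell-Theorem1}. The only inputs needed are the dictionary relating the $4$-Pell and $4$-Pell-Lucas numbers to the classical Lucas and Fibonacci numbers, namely $Q_{4,\ell n}=2^{\ell n}L_{\ell n}$, $Q_{4,\ell}=2^{\ell}L_{\ell}$, and $P_{4,\ell}=2^{\ell-1}F_{\ell}$, together with $\sqrt{1+k}=\sqrt{5}$. These all follow from the stated relations $F_n=\frac{1}{2^{n-1}}P_{4,n}$ and $L_n=\frac{1}{2^{n}}Q_{4,n}$.

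First I would set $k=4$ in \eqref{ckPell-Lucas}, writing the identity in terms of $Q_{4,\ell n}$, $Q_{4,\ell}$, and $P_{4,\ell}$, with $2\sqrt{1+k}=2\sqrt{5}$, and then substitute the dictionary above. The key observation is that each of the two linear factors inside the integrand carries a common factor of $2^{\ell}$: indeed $2\sqrt{5}\,P_{4,\ell}=2^{\ell}\sqrt{5}\,F_{\ell}$ while $Q_{4,\ell}=2^{\ell}L_{\ell}$, so the prefactor $Q_{4,\ell}+2(n+1)\sqrt{5}\,P_{4,\ell}x$ equals $2^{\ell}\br{L_{\ell}+(n+1)\sqrt{5}\,F_{\ell}x}$ and the base $Q_{4,\ell}+2\sqrt{5}\,P_{4,\ell}x$ equals $2^{\ell}\br{L_{\ell}+\sqrt{5}\,F_{\ell}x}$.

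Next I would collect the powers of two. Pulling $2^{\ell}$ out of the prefactor and $(2^{\ell})^{n-1}$ out of the $(n-1)$st power of the base produces a total factor $2^{\ell}\cdot 2^{\ell(n-1)}=2^{\ell n}$ in front of the integral, while on the left-hand side $Q_{4,\ell n}=2^{\ell n}L_{\ell n}$ supplies the matching factor $2^{\ell n}$. Dividing both sides by $2^{\ell n}$ then leaves precisely
\[
L_{\ell n}=\frac{1}{2^n}\int_{-1}^{1}\br{L_\ell+\sqrt{5}(n+1)F_\ell x}\br{L_\ell+\sqrt{5}F_\ell x}^{n-1}\,dx.
\]

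There is no genuine obstacle here, since the analytic content of the result already lives in Theorem~\ref{C-k-Pell-Lucas-Theorem1}; the corollary is purely a change of normalization. The one point requiring care is bookkeeping of the powers of two: one must verify that the single $2^{\ell}$ extracted from the prefactor and the $2^{\ell(n-1)}$ extracted from the $(n-1)$-fold product combine to cancel exactly the $2^{\ell n}$ coming from $Q_{4,\ell n}$, leaving the surviving weight $1/2^{n}$ untouched. Because the $x$-dependence is identical in both linear factors, this rescaling is uniform across the integrand, and no boundary or degenerate-case analysis beyond the $n=0$ or $\ell=0$ cases already handled in Theorem~\ref{C-k-Pell-Lucas-Theorem1} is needed.
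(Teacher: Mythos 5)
Your proposal is correct and follows essentially the same route as the paper: setting $k=4$ in \eqref{ckPell-Lucas}, applying the dictionary $P_{4,\ell}=2^{\ell-1}F_{\ell}$, $Q_{4,\ell}=2^{\ell}L_{\ell}$, $Q_{4,\ell n}=2^{\ell n}L_{\ell n}$, and cancelling the factor $2^{\ell n}$ that emerges from the integrand against the one on the left-hand side. Your power-of-two bookkeeping matches the paper's computation exactly, so nothing further is needed.
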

\begin{proof} Notice that $P_{4,\ell}=2^{\ell -1}F_{\ell}$, $Q_{4,\ell n}=2^{\ell n}L_{\ell n}$, and  $Q_{4,\ell}=2^\ell L_\ell$. Setting $k=4$ in \eqref{ckPell-Lucas}, we get
\begin{align*}
L_{\ell n}&=\frac{Q_{4,\ell n}}{2^{\ell n}}\\
&=\frac{1}{2^{\ell n+n}}\int_{-1}^{1}(Q_{4,\ell}+2(n+1)\sqrt{1+4}\, P_{4,\ell} x)(Q_{4,\ell}+2\sqrt{1+4}\, P_{4,\ell} x)^{n-1}dx\\
&=\frac{1}{2^{\ell n+n}}\int_{-1}^{1}(2^\ell L_\ell+2^{\ell}(n+1)\sqrt{5}\,F_{\ell} x)(2^\ell L_\ell+2^\ell\sqrt{5}\, F_{\ell} x)^{n-1}dx\\
&=\frac{1}{2^n}\int_{-1}^{1} (L_\ell+\sqrt{5}(n+1)F_\ell x)(L_\ell+\sqrt{5}F_\ell x)^{n-1} dx.
\end{align*}
This completes the proof.
\end{proof}
Finally, both $P_{k,\ell n}$ and $Q_{k, \ell n}$ are then used to establish integral representations for the $k$-Pell numbers $P_{k,\ell n+r}$ and $k$-Pell-Lucas numbers $Q_{k, \ell n+r}$ as the following theorems.
\begin{theorem}\label{C-k-Pell-Theorem2}For $k$, $\ell$, $n$, and $r$ are  non-negative integers with $k\neq 0$, the $k$-Pell numbers $P_{k,\ell n+r}$ can be represented by the integral
$$
P_{k,\ell n+r}=\frac{1}{2^{n+1}}\hspace{-.1cm}\int_{-1}^{1}\hspace{-.1cm}\br{nP_{k,\ell} Q_{k,r}+P_{k,r} Q_{k,\ell}+(n+1)\Delta_k \,P_{k,\ell} P_{k,r} x}(Q_{k,\ell}+\Delta_k \,P_{k,\ell} x)^{n-1}dx,
$$
where $\Delta_k =2\sqrt{1+k}.$
\end{theorem}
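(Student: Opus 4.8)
The plan is to reduce $P_{k,\ell n+r}$ to the two quantities $P_{k,\ell n}$ and $Q_{k,\ell n}$, for which integral representations are already in hand from Theorems \ref{C-k-Pell-Theorem1} and \ref{C-k-Pell-Lucas-Theorem1}, and then to merge the two resulting integrals into a single one. The decisive identity is Lemma \ref{lemma2.2}\ref{lemma2.2-1}, applied with $m$ replaced by $\ell n$ and $n$ replaced by $r$, which gives $2P_{k,\ell n+r}=P_{k,\ell n}Q_{k,r}+P_{k,r}Q_{k,\ell n}$. Before invoking the integral formulas I would dispose of the degenerate cases $n=0$ and $\ell=0$ by direct substitution, just as in the proofs of the earlier theorems; these must be handled separately because the cited representations carry the factor $P_{k,\ell}$, which vanishes when $\ell=0$, and because the exponent $n-1$ becomes negative when $n=0$.

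For the principal case $\ell,n>0$, I would substitute into $2P_{k,\ell n+r}=P_{k,\ell n}Q_{k,r}+P_{k,r}Q_{k,\ell n}$ the representation $P_{k,\ell n}=\frac{nP_{k,\ell}}{2^n}\int_{-1}^{1}(Q_{k,\ell}+\Delta_k P_{k,\ell}x)^{n-1}dx$ from Theorem \ref{C-k-Pell-Theorem1} together with $Q_{k,\ell n}=\frac{1}{2^n}\int_{-1}^{1}(Q_{k,\ell}+(n+1)\Delta_k P_{k,\ell}x)(Q_{k,\ell}+\Delta_k P_{k,\ell}x)^{n-1}dx$ from Theorem \ref{C-k-Pell-Lucas-Theorem1}. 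Both integrands share the common factor $(Q_{k,\ell}+\Delta_k P_{k,\ell}x)^{n-1}$, so after dividing by $2$ and collecting the two integrals over the same interval $[-1,1]$ I can pull that common factor out front.

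The remaining work is purely algebraic. The bracketed coefficient becomes $nP_{k,\ell}Q_{k,r}+P_{k,r}(Q_{k,\ell}+(n+1)\Delta_k P_{k,\ell}x)$, which I would expand and regroup as $nP_{k,\ell}Q_{k,r}+P_{k,r}Q_{k,\ell}+(n+1)\Delta_k P_{k,\ell}P_{k,r}x$, matching the integrand in the statement exactly, while the scalar prefactors combine as $\frac{1}{2}\cdot\frac{1}{2^n}=\frac{1}{2^{n+1}}$.

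I expect no genuine obstacle here, since the heavy lifting is already delivered by the two cited theorems and the merging step is just a linear factorization; the only points demanding care are the bookkeeping of the $2^n$ factors and the verification of the boundary cases. For $n=0$ one checks that the numerator factors as $P_{k,r}(Q_{k,\ell}+\Delta_k P_{k,\ell}x)$, which cancels the formal power $(Q_{k,\ell}+\Delta_k P_{k,\ell}x)^{-1}$ and reduces the integral to $\int_{-1}^{1}P_{k,r}\,dx=2P_{k,r}$, so that the right-hand side collapses to $P_{k,r}=P_{k,\ell\cdot 0+r}$; for $\ell=0$ one uses $P_{k,0}=0$ and $Q_{k,0}=2$ to see the integrand reduce to $2^nP_{k,r}$, again recovering the left-hand side.
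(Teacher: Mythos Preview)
Your proposal is correct and follows essentially the same route as the paper: apply Lemma~\ref{lemma2.2}\ref{lemma2.2-1} with $m=\ell n$ and $n=r$ to split $P_{k,\ell n+r}$ into a combination of $P_{k,\ell n}$ and $Q_{k,\ell n}$, then substitute the integral representations of Theorems~\ref{C-k-Pell-Theorem1} and~\ref{C-k-Pell-Lucas-Theorem1} and combine over the common factor $(Q_{k,\ell}+\Delta_k P_{k,\ell}x)^{n-1}$. Your explicit treatment of the degenerate cases $n=0$ and $\ell=0$ is actually more thorough than the paper's own proof, which omits them.
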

\begin{proof}Using Lemma \ref{lemma2.2} \ref{lemma2.2-1} with $m$ and $n$ replaced by $\ell n$ and $r$ respectively, we get
$$P_{k,\ell n+r}=\frac{1}{2}P_{k,\ell n}Q_{k,r}+\frac{1}{2}P_{k,r}Q_{k,\ell n}.$$
Applying Theorems \ref{C-k-Pell-Theorem1}  and \ref{C-k-Pell-Lucas-Theorem1} leads to
\begin{align*}
&P_{k,\ell n+r}\\
&=\frac{1}{2}\br{\frac{nP_{k,\ell}}{2^n}\int_{-1}^{1}(Q_{k,\ell}+\Delta_k\,P_{k,\ell} x)^{n-1}dx}Q_{k,r}\\
&\quad+\frac{1}{2}P_{k,r}\br{\frac{1}{2^n}\int_{-1}^{1}(Q_{k,\ell}+(n+1)\Delta_k\, P_{k,\ell} x)(Q_{k,\ell}+\Delta_k\, P_{k,\ell} x)^{n-1}dx}\\
&=\frac{1}{2^{n+1}}\hspace{-.1cm}\int_{-1}^{1}\hspace{-.1cm}\br{nP_{k,\ell} Q_{k,r}+P_{k,r} Q_{k,\ell}+(n+1)\Delta_k \,P_{k,\ell} P_{k,r} x}(Q_{k,\ell}+\Delta_k \,P_{k,\ell} x)^{n-1}dx.
\end{align*}
This completes the proof.
\end{proof}
Setting $k=1$ in Theorem \ref{C-k-Pell-Theorem2}, we have the following corollary.
\begin{corollary}[\cite{ANil2024}, Theorem 3.5] For $\ell$, $n$, and $r$ are  non-negative integers, the Pell numbers $P_{\ell n+r}$ can be represented by the integral
$$
P_{\ell n+r}=\frac{1}{2^{n+1}}\int_{-1}^{1}\br{nP_{\ell} Q_{r}+P_{r} Q_{\ell}+2\sqrt{2}(n+1) \,P_{\ell} P_{r} x}(Q_{\ell}+2\sqrt{2}\,P_{\ell} x)^{n-1}dx.
$$
\end{corollary}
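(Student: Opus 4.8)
The plan is to reduce the assertion to the two integral representations already established in Theorems \ref{C-k-Pell-Theorem1} and \ref{C-k-Pell-Lucas-Theorem1}, using the addition formula of Lemma \ref{lemma2.2}. First I would dispose of the degenerate cases $n=0$ or $\ell=0$ by direct inspection, so that thereafter I may assume $\ell,n>0$; in particular $P_{k,\ell}>0$, which is exactly the positivity needed for the earlier integral formulas to be meaningful, since their derivations divide by $P_{k,\ell}$.

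The central step is to invoke Lemma \ref{lemma2.2}\ref{lemma2.2-1} with $m$ and $n$ replaced by $\ell n$ and $r$ respectively, giving the splitting
\begin{equation*}
P_{k,\ell n+r}=\frac{1}{2}P_{k,\ell n}Q_{k,r}+\frac{1}{2}P_{k,r}Q_{k,\ell n}.
\end{equation*}
This is the point at which the problem becomes tractable: it separates the long index $\ell n+r$ into the two quantities $P_{k,\ell n}$ and $Q_{k,\ell n}$, for which integral representations are already in hand. I would then substitute the representation of $P_{k,\ell n}$ from \eqref{ckPell} and that of $Q_{k,\ell n}$ from \eqref{ckPell-Lucas} into this splitting.

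Both resulting integrands carry the common factor $(Q_{k,\ell}+\Delta_k\,P_{k,\ell}x)^{n-1}$ and the common prefactor $1/2^{n}$, so after pulling out $1/2^{n+1}$ the two integrals merge into a single integral over $[-1,1]$ whose integrand is $(Q_{k,\ell}+\Delta_k\,P_{k,\ell}x)^{n-1}$ times a linear expression in $x$. Collecting the constant term and the coefficient of $x$ in that linear factor reproduces precisely $nP_{k,\ell}Q_{k,r}+P_{k,r}Q_{k,\ell}+(n+1)\Delta_k\,P_{k,\ell}P_{k,r}\,x$, which is the asserted integrand. I do not expect a genuine obstacle: once the addition formula is applied, the remaining work is purely the bookkeeping of combining two integrals that share a factor, and the only point requiring a little care is the treatment of the degenerate endpoint cases together with the implicit assumption $P_{k,\ell}\neq0$.
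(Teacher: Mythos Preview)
Your argument is correct and mirrors exactly the paper's proof of Theorem~\ref{C-k-Pell-Theorem2}: apply Lemma~\ref{lemma2.2}\ref{lemma2.2-1} to split $P_{k,\ell n+r}$, then substitute the integral representations \eqref{ckPell} and \eqref{ckPell-Lucas} and collect terms. The paper obtains this corollary simply by specializing Theorem~\ref{C-k-Pell-Theorem2} to $k=1$ (so $\Delta_1=2\sqrt{2}$, $P_{1,m}=P_m$, $Q_{1,m}=Q_m$); you should add that final specialization, since your write-up stays in the general $k$-Pell notation while the statement concerns the classical Pell numbers.
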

Setting $k=4$ in Theorem \ref{C-k-Pell-Theorem2}, we have the following corollary.
\begin{corollary}[\cite{Ste2022}, Theorem 2.3] For $\ell$, $n$, and $r$ are  non-negative integers, the Fibonacci numbers $F_{\ell n+r}$ can be represented by the integral
$$
F_{\ell n+r}=\frac{1}{2^{n+1}}\int_{-1}^{1}\br{nF_{\ell} L_{r}+F_{r} L_{\ell}+\sqrt{5}(n+1) \,F_{\ell} F_{r} x}(L_{\ell}+\sqrt{5}\,F_{\ell} x)^{n-1}dx.
$$
\end{corollary}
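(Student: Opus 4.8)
The plan is to obtain this corollary as the $k=4$ specialization of Theorem \ref{C-k-Pell-Theorem2}, exactly in the spirit of the earlier corollaries that recovered $P_{\ell n}$, $L_{\ell n}$, and $F_{\ell n}$. The key input is the dictionary relating the $4$-Pell and $4$-Pell-Lucas numbers to the Fibonacci and Lucas numbers, namely $P_{4,m}=2^{m-1}F_m$ and $Q_{4,m}=2^m L_m$ for every non-negative integer $m$, together with the observation that $\Delta_4=2\sqrt{1+4}=2\sqrt{5}$. I would apply this dictionary at the three indices that appear in Theorem \ref{C-k-Pell-Theorem2}, namely $\ell n+r$, $\ell$, and $r$.

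First I would write down Theorem \ref{C-k-Pell-Theorem2} with $k=4$, so that the left-hand side becomes $P_{4,\ell n+r}=2^{\ell n+r-1}F_{\ell n+r}$ and the integrand involves $P_{4,\ell}=2^{\ell-1}F_\ell$, $P_{4,r}=2^{r-1}F_r$, $Q_{4,\ell}=2^\ell L_\ell$, and $Q_{4,r}=2^r L_r$. Substituting these into the linear factor $nP_{4,\ell}Q_{4,r}+P_{4,r}Q_{4,\ell}+(n+1)\Delta_4\,P_{4,\ell}P_{4,r}x$, each of its three terms would carry the common factor $2^{\ell+r-1}$, leaving $nF_\ell L_r+F_r L_\ell+(n+1)\sqrt{5}\,F_\ell F_r x$. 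Likewise the base of the power factors as $Q_{4,\ell}+\Delta_4 P_{4,\ell}x=2^\ell\br{L_\ell+\sqrt{5}\,F_\ell x}$, so that $(Q_{4,\ell}+\Delta_4 P_{4,\ell}x)^{n-1}=2^{\ell(n-1)}(L_\ell+\sqrt{5}\,F_\ell x)^{n-1}$.

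The remaining step is pure bookkeeping of powers of two: collecting the prefactor $2^{-(n+1)}$ with the factors $2^{\ell+r-1}$ and $2^{\ell(n-1)}$ pulled out of the integrand, and then dividing through by the $2^{\ell n+r-1}$ appearing on the left. Tracking the exponents, the right-hand side carries $2^{\ell n+r-n-2}$ before the division, and after dividing by $2^{\ell n+r-1}$ this collapses precisely to $2^{-(n+1)}$ in front of the integral, yielding the stated formula. The only place to be careful is this exponent arithmetic, since a single slip would misplace the power of two; everything else is a direct substitution, and no genuine analytic obstacle arises because the integration itself has already been carried out in Theorem \ref{C-k-Pell-Theorem2}.
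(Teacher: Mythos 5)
Your proposal is correct and is exactly the argument the paper intends: it specializes Theorem \ref{C-k-Pell-Theorem2} to $k=4$ via the dictionary $P_{4,m}=2^{m-1}F_m$, $Q_{4,m}=2^mL_m$ and $\Delta_4=2\sqrt{5}$, precisely mirroring the explicit proofs the paper gives for the $F_{\ell n}$ and $L_{\ell n}$ corollaries (the paper leaves this instance implicit). Your exponent bookkeeping also checks out: $2^{-(n+1)}\cdot 2^{\ell+r-1}\cdot 2^{\ell(n-1)}=2^{\ell n+r-n-2}$, and dividing by $2^{\ell n+r-1}$ indeed leaves the factor $2^{-(n+1)}$.
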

\begin{theorem}\label{C-k-Pell-Lucas-Theorem2}For $k$, $\ell$, $n$, and $r$ are  non-negative integers with $k\neq 0$, the $k$-Pell-Lucas numbers $Q_{k,\ell n+r}$ can be represented by the integral
$$
Q_{k,\ell n+r}\hspace{-.1cm}=\frac{1}{2^{n+1}}\hspace{-.1cm}\int_{-1}^{1}\hspace{-.1cm}\br{n\Delta_k^2P_{k,\ell} P_{k,r}\hspace{-.1cm}+Q_{k,\ell}Q_{k,r}\hspace{-.1cm}+(n+1)\Delta_k \,P_{k,\ell} Q_{k,r} x}(Q_{k,\ell}+\Delta_k \,P_{k,\ell} x)^{n-1}dx,
$$
where $\Delta_k =2\sqrt{1+k}.$
\end{theorem}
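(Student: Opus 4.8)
The plan is to mirror the proof of Theorem \ref{C-k-Pell-Theorem2}, but starting from the companion addition formula in Lemma \ref{lemma2.2}\ref{lemma2.2-2} rather than \ref{lemma2.2-1}. First I would apply Lemma \ref{lemma2.2}\ref{lemma2.2-2} with $m$ and $n$ replaced by $\ell n$ and $r$, respectively, which gives
$$Q_{k,\ell n+r}=\tfrac12 Q_{k,\ell n}Q_{k,r}+\tfrac12\cdot 4(1+k)\,P_{k,\ell n}P_{k,r}.$$
The key observation is that $4(1+k)=\Delta_k^2$, so the second term already carries exactly the factor $\Delta_k^2$ that must appear in the claimed integrand.

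Next I would substitute the two integral representations that have already been established: Theorem \ref{C-k-Pell-Theorem1} for $P_{k,\ell n}$ and Theorem \ref{C-k-Pell-Lucas-Theorem1} for $Q_{k,\ell n}$. Both integrals share the common factor $(Q_{k,\ell}+\Delta_k P_{k,\ell} x)^{n-1}$ together with a prefactor $2^{-n}$, so after pulling out the overall constant $\frac{1}{2^{n+1}}$ the two integrands can be merged into a single integral over $[-1,1]$.

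Finally I would collect the terms inside the resulting bracket. The $Q_{k,\ell n}$ contribution supplies $Q_{k,r}\br{Q_{k,\ell}+(n+1)\Delta_k P_{k,\ell} x}$, while the $P_{k,\ell n}$ contribution supplies $n\Delta_k^2 P_{k,\ell}P_{k,r}$; summing these and reordering yields precisely $n\Delta_k^2 P_{k,\ell}P_{k,r}+Q_{k,\ell}Q_{k,r}+(n+1)\Delta_k P_{k,\ell}Q_{k,r}\,x$, which is the stated integrand. I would also dispose of the degenerate cases $n=0$ or $\ell=0$ separately, as in the earlier theorems. The only real bookkeeping point is tracking the constant $4(1+k)=\Delta_k^2$ through the substitution and matching the $x$-linear coefficient; there is no genuine analytic obstacle here, since all the integration was already carried out in Theorems \ref{C-k-Pell-Theorem1} and \ref{C-k-Pell-Lucas-Theorem1}, and this result is a pure linear-combination assembly built on top of them.
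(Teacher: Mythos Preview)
Your proposal is correct and follows exactly the paper's own argument: apply Lemma \ref{lemma2.2}\ref{lemma2.2-2} with $m=\ell n$ to write $Q_{k,\ell n+r}=\tfrac12 Q_{k,\ell n}Q_{k,r}+\tfrac{\Delta_k^2}{2}P_{k,\ell n}P_{k,r}$, then substitute the integral representations from Theorems \ref{C-k-Pell-Theorem1} and \ref{C-k-Pell-Lucas-Theorem1} and combine. In fact you supply more of the bookkeeping than the paper, which simply states the addition formula and then says ``This together with Theorems \ref{C-k-Pell-Theorem1} and \ref{C-k-Pell-Lucas-Theorem1} gives that the proof is finish.''
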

\begin{proof}
Using Lemma \ref{lemma2.2} \ref{lemma2.2-2} with $m$ and $n$ replaced by $\ell n$ and $r$ respectively, we get
$$Q_{k,\ell n+r}=\frac{1}{2}Q_{k,\ell n}Q_{k,r}+\frac{\Delta_k^2}{2}P_{k,\ell n}P_{k,r}.$$
This together with  Theorems \ref{C-k-Pell-Theorem1}  and \ref{C-k-Pell-Lucas-Theorem1} gives that  the proof is finish.
\end{proof}
Setting $k=1$ in Theorem \ref{C-k-Pell-Lucas-Theorem2}, we have the following corollary.
\begin{corollary}[\cite{ANil2024}, Theorem 3.6] For $\ell$, $n$, and $r$ are  non-negative integers, the Pell-Lucas numbers $Q_{\ell n+r}$ can be represented by the integral
$$
Q_{\ell n+r}=\frac{1}{2^{n+1}}\int_{-1}^{1}\br{8nP_{\ell} P_{r}+Q_{\ell} Q_{r}+2\sqrt{2}(n+1)  \,P_{\ell} Q_{r} x}(Q_{\ell}+2\sqrt{2}\,P_{\ell} x)^{n-1}dx.
$$
\end{corollary}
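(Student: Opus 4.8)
The plan is to follow exactly the template used for Theorem \ref{C-k-Pell-Theorem2}, but with the product-to-sum identity of Lemma \ref{lemma2.2} \ref{lemma2.2-1} replaced by its companion \ref{lemma2.2-2}. The starting point is to apply Lemma \ref{lemma2.2} \ref{lemma2.2-2} with $m$ and $n$ replaced by $\ell n$ and $r$, which, after writing $4(1+k)=\Delta_k^2$, yields
\begin{equation*}
Q_{k,\ell n+r}=\frac{1}{2}Q_{k,\ell n}Q_{k,r}+\frac{\Delta_k^2}{2}P_{k,\ell n}P_{k,r}.
\end{equation*}
This reduces the whole problem to expressing $Q_{k,\ell n}$ and $P_{k,\ell n}$ as integrals and then merging the two.

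First I would insert the integral representation of $Q_{k,\ell n}$ from Theorem \ref{C-k-Pell-Lucas-Theorem1} into the first summand and the integral representation of $P_{k,\ell n}$ from Theorem \ref{C-k-Pell-Theorem1} into the second. Because both representations carry the common kernel $(Q_{k,\ell}+\Delta_k P_{k,\ell}x)^{n-1}$ and share the prefactor $2^{-n}$, the two integrals combine over $[-1,1]$ into a single integral whose prefactor becomes $\frac{1}{2}\cdot 2^{-n}=2^{-(n+1)}$. Concretely, the first summand contributes $Q_{k,r}\br{Q_{k,\ell}+(n+1)\Delta_k P_{k,\ell}x}$ to the integrand and the second contributes the constant term $n\Delta_k^2 P_{k,\ell}P_{k,r}$.

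The only genuine work is the bookkeeping of the resulting bracket: expanding $Q_{k,r}\br{Q_{k,\ell}+(n+1)\Delta_k P_{k,\ell}x}+n\Delta_k^2 P_{k,\ell}P_{k,r}$ and collecting the constant and the linear-in-$x$ terms reproduces $n\Delta_k^2 P_{k,\ell}P_{k,r}+Q_{k,\ell}Q_{k,r}+(n+1)\Delta_k P_{k,\ell}Q_{k,r}x$, which is precisely the integrand in the statement. I expect no analytic obstacle at all, since every integration has already been discharged inside Theorems \ref{C-k-Pell-Theorem1} and \ref{C-k-Pell-Lucas-Theorem1}; the step needing the most care is simply matching coefficients correctly. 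The degenerate cases $n=0$ or $\ell=0$ require no separate treatment here, because the algebraic identity of Lemma \ref{lemma2.2} \ref{lemma2.2-2} holds unconditionally and the two cited theorems already validate their own integral representations in those cases, so the combination inherits validity for all admissible $n$ and $\ell$.
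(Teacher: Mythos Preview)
Your argument is correct and is exactly the paper's proof of Theorem~\ref{C-k-Pell-Lucas-Theorem2}; the corollary itself is then obtained in the paper by the trivial specialization $k=1$ (so $\Delta_1=2\sqrt{2}$, $\Delta_1^2=8$, $P_{1,m}=P_m$, $Q_{1,m}=Q_m$), which you should state explicitly since your derivation is written for general $k$ while the statement to be proved is the $k=1$ case.
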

Setting $k=4$ in Theorem \ref{C-k-Pell-Lucas-Theorem2}, we have the following corollary.
\begin{corollary}[\cite{Ste2022}, Theorem 2.4] For $\ell$, $n$, and $r$ are  non-negative integers, the Lucas numbers $L_{\ell n+r}$ can be represented by the integral
$$
L_{\ell n+r}=\frac{1}{2^{n+1}}\int_{-1}^{1}\br{5nF_{\ell} F_{r}+L_{\ell} L_{r}+\sqrt{5}(n+1)  \,F_{\ell} L_{r} x}(L_{\ell}+\sqrt{5}\,F_{\ell} x)^{n-1}dx.
$$
\end{corollary}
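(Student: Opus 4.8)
The plan is to derive this corollary directly from Theorem \ref{C-k-Pell-Lucas-Theorem2} by specializing to $k=4$ and translating every $k$-Pell and $k$-Pell-Lucas symbol into its Fibonacci/Lucas counterpart. The conversion dictionary I will use is recorded in Section \ref{Preliminaries}: $P_{4,m}=2^{m-1}F_m$ and $Q_{4,m}=2^m L_m$ for every index $m$. In particular $P_{4,\ell}=2^{\ell-1}F_\ell$, $P_{4,r}=2^{r-1}F_r$, $Q_{4,\ell}=2^\ell L_\ell$, $Q_{4,r}=2^r L_r$, and $Q_{4,\ell n+r}=2^{\ell n+r}L_{\ell n+r}$. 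I also note that $\Delta_4=2\sqrt{1+4}=2\sqrt5$ and hence $\Delta_4^2=20$; it is precisely this contraction $\Delta_4^2=20$ that must produce the coefficient $5$ in the stated formula.

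First I would rewrite the integrand of Theorem \ref{C-k-Pell-Lucas-Theorem2} at $k=4$ factor by factor. The base $(Q_{4,\ell}+\Delta_4 P_{4,\ell}x)^{n-1}$ becomes $\br{2^\ell L_\ell+2\sqrt5\cdot2^{\ell-1}F_\ell x}^{n-1}=2^{\ell(n-1)}(L_\ell+\sqrt5 F_\ell x)^{n-1}$, using $2\sqrt5\cdot2^{\ell-1}=\sqrt5\cdot2^\ell$. For the linear prefactor, each of its three terms carries a common factor $2^{\ell+r}$: the term $n\Delta_4^2 P_{4,\ell}P_{4,r}=20n\cdot2^{\ell-1}2^{r-1}F_\ell F_r=5n\cdot2^{\ell+r}F_\ell F_r$; the term $Q_{4,\ell}Q_{4,r}=2^{\ell+r}L_\ell L_r$; and the term $(n+1)\Delta_4 P_{4,\ell}Q_{4,r}x=(n+1)\cdot2\sqrt5\cdot2^{\ell-1}2^r F_\ell L_r x=(n+1)\sqrt5\cdot2^{\ell+r}F_\ell L_r x$. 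Factoring out $2^{\ell+r}$ leaves exactly the prefactor $5nF_\ell F_r+L_\ell L_r+(n+1)\sqrt5 F_\ell L_r x$ appearing in the claim.

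Next I would collect the powers of $2$. The right-hand side of Theorem \ref{C-k-Pell-Lucas-Theorem2} at $k=4$ now carries the scalar $2^{-(n+1)}\cdot2^{\ell+r}\cdot2^{\ell(n-1)}$ multiplying the target integral, and the exponent simplifies to $\ell n+r-n-1$. Equating with the left-hand side $Q_{4,\ell n+r}=2^{\ell n+r}L_{\ell n+r}$ and cancelling the common factor $2^{\ell n+r}$ yields $L_{\ell n+r}=2^{-n-1}$ times the integral, which is precisely the stated representation. No genuine analysis is involved beyond the specialization already established in Theorem \ref{C-k-Pell-Lucas-Theorem2}; the only real care is the bookkeeping, so the exponent arithmetic $-(n+1)+(\ell+r)+\ell(n-1)=\ell n+r-n-1$ is the one place where a slip would propagate, and I would verify it as the final step.
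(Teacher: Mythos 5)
Your proposal is correct and is exactly the argument the paper intends: the paper states this corollary by setting $k=4$ in Theorem \ref{C-k-Pell-Lucas-Theorem2} and (as in its written-out proofs of the analogous $k=4$ corollaries earlier) converts via $P_{4,m}=2^{m-1}F_m$, $Q_{4,m}=2^m L_m$, with the powers of $2$ cancelling just as in your exponent check $-(n+1)+(\ell+r)+\ell(n-1)=\ell n+r-n-1$. You have merely written out the bookkeeping the paper leaves implicit, including the correct observation that $\Delta_4^2\cdot 2^{-2}=5$ produces the coefficient $5$.
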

\section{Conclusions}
In this paper, new integral representations of the $k$-Pell  and $k$-Pell-Lucas numbers have been introduced and studied. Many of the properties of these numbers are proved by using Binet’s formulas. We also establish some identities and simple integral calculus to prove them. Moreover, our results are also deduced and related to the Fibonacci, Lucas, Pell, and  Pell-Lucas numbers.
\section*{Acknowledgements}
We would like to thank the referees for their comments and suggestions on the manuscript. The first author is supported by the Department of Mathematics, Faculty of Science, Ubon Ratchatani Rajabhat University.

\label{LastPage}
\end{document}